\numberwithin{equation}{section}
\theoremstyle{plain}
\newtheorem{theorem}{Theorem}[section]
\newtheorem{lemma}[theorem]{Lemma}
\theoremstyle{remark}
\newtheorem*{rmk}{Remark}
\begin{document}

\title[Pseudo-differential operators]{$L^p$ boundedness of  pseudo-differential operators with symbols in  $S^{n(\rho-1)/2}_{\rho,1}$}

%\author[J. Guo and X. Zhu]{Jingwei Guo \  \ Xiangrong Zhu}

\author[J. Guo]{Jingwei Guo}
\address{Department of Mathematics\\
University of Science and Technology of China\\
Hefei 230026, China}
\email{jwguo@ustc.edu.cn}

\author[X. Zhu]{Xiangrong Zhu}
\address{Department of Mathematics\\
Zhejiang Normal University\\
Jinhua 321004, China}
\email{zxr@zjnu.cn}
%\email{zxr@zjnu.cn (corresponding author)}

\thanks{Xiangrong Zhu (the corresponding author) was supported by the National Key Research and Development Program of China (No. 2022YFA1005700) and the NSFC Grant (No. 11871436). Jingwei Guo was supported by the NSF of Anhui Province, China (No. 2108085MA12).}

\date{}

\begin{abstract}
For symbol $a\in S^{n(\rho-1)/2}_{\rho,1}$ the pseudo-differential operator $T_a$ may not be $L^2$ bounded. However, under some mild extra assumptions on $a$, we show that $T_a$ is bounded from $L^{\infty}$ to $BMO$ and on $L^p$  for $2\leq p<\infty$. A key ingredient in our proof of the $L^{\infty}$-$BMO$ boundedness is that we decompose a cube, use $x$-regularity of the symbol and combine certain $L^2$, $L^\infty$ and $L^{\infty}$-$BMO$ boundedness. We use an almost orthogonality argument to prove an $L^2$ boundedness and then interpolation to obtain the desired $L^p$ boundedness.
\end{abstract}

\subjclass[2020]{35S05.}

\keywords{Pseudo-differential operators, $L^p$ boundedness, $L^{\infty}$-$BMO$ boundedness.}

\maketitle

\section{Introduction and main results}

A pseudo-differential operator is an operator given by
\begin{equation}
	T_a f(x)=\int_{\mathbb{R}^n}\!e^{2\pi ix\cdot\xi}a(x,\xi)\widehat{f}(\xi)\,\textrm{d}\xi, \quad f\in \mathscr{S}(\mathbb{R}^n),  \label{1}
\end{equation}
where  $\widehat{f}$ is the Fourier transform of $f$ and  the symbol $a$ belongs to a certain symbol class. One of the most important symbol classes is the H\"{o}rmander class $S^m_{\rho,\delta}$ introduced in H\"{o}rmander \cite{H66}.
A function $a(x,\xi)\in C^{\infty}(\mathbb{R}^n\times\mathbb{R}^n)$ belongs to the H\"{o}rmander class $S^{m}_{\rho,\delta}$ $(m\in \mathbb{R},0\leq\rho,\delta\leq1)$ if it satisfies
\begin{equation*}
\sup_{x,\xi\in\mathbb{R}^{n}}(1+|\xi|)^{-m+\rho N-\delta M}\left|\nabla^{N}_{\xi}\nabla^{M}_{x}a(x,\xi)\right|=A_{N,M}<\infty
\end{equation*}
for all nonnegative integers $N$ and $M$. We may assume additionally that $a\in S^{m}_{\rho,\delta}$ is compactly supported. Since all our estimates in this paper are independent of the size of the support of symbol $a$ unless clearly stated, one can remove this extra assumption by following the argument in Stein \cite[Sec. VII.2.5]{S93}.

There are numerous literature discussing whether pseudo-differential operators are bounded on the Lebesgue space $L^p(\mathbb{R}^n)$  and the Hardy space $H^1(\mathbb{R}^n)$. We mention a few examples. If $a\in S^{m}_{\rho,\delta}$ with $\delta<1$ and $m\leq \min\{0, n(\rho-\delta)/2\}$,
then $T_a$ is bounded on $L^2$ and the range of $m$ is sharp. See H\"{o}rmander \cite{H71}, Calder\'{o}n-Vaillancourt \cite{CV71,CV72}, Hounie \cite{H86}, etc. For $a\in S^{m}_{\rho,1}$, Rodino \cite{R76} proved that
$T_a$ is bounded on $L^2$ if $m<n(\rho-1)/2$ and constructed a symbol $a\in S^{n(\rho-1)/2}_{\rho,1}$ such that $T_a$ is unbounded on $L^2$. For endpoint estimates, in some unpublished lecture notes,
Stein showed that if $a\in S^{n(\rho-1)/2}_{\rho,\delta}$ and either $0\leq\delta<\rho=1$ or $0<\delta=\rho<1$, then $T_a$ is of weak type $(1,1)$ and bounded from $H^1$ to $L^1$.
This result was extended in \'{A}lvarez-Hounie \cite{AH90} to symbols $a\in S^{m}_{\rho,\delta}$ with $0<\rho\leq 1, 0\leq \delta<1$ and $m=n(\rho-1+\min\{0,\rho-\delta\})/2$. For a systematic study on the $H^1$-$L^1$ boundedness of $T_a$ with $a\in S^{m}_{\rho,1}$ when $m$ equals to the critical index $n(\rho-1)$, see the authors \cite{GZ}.

On the other hand, Kenig-Staubach \cite{KW07} proved that $T_a$ is bounded on $L^{\infty}$ if $a\in S^{m}_{\rho,1}$ with $m<n(\rho-1)/2$. It is also known that even if $a\in S^{n(\rho-1)/2}_{\rho,0}$ is independent of $x$,  $T_a$ is still not bounded on $L^{\infty}$ in general (see \cite[Remark 2.6]{KW07}).

When $a\in S^{n(\rho-1)/2}_{\rho,1}$, as $T_a$ is not bounded on $L^2$ in general, it is reasonable to expect that $T_a$ is not bounded from $L^{\infty}$ to $BMO$
in general either. However, under some mild extra assumptions we will show that it is bounded from $L^{\infty}$ to $BMO$ and on $L^p$ as well for $2\leq p<\infty$.
\begin{theorem}\label{main3}
	Suppose that $w$ is a function from $(0,\infty)$ to $(0,\infty)$ and there exist constants $A>1$ and $u\in (0,n/(n+2))$ such that
\begin{equation}
\int^{\infty}_1\left(\frac{w(t)}{t}\right)^{u}\frac{\textrm{d}t}{t}\leq A \label{6}	
\end{equation}
and $w(t_1)\leq Aw(t_2)$ whenever $0<t_1\leq t_2$.

If the symbol $a\in S^{n(\rho-1)/2}_{\rho,1}$, $0\leq\rho<1$, satisfies that
\begin{equation*}
\sup_{x,\xi\in\mathbb{R}^{n}} \left(1+|\xi|\right)^{-n(\rho-1)/2+\rho N}\left(w\left(|\xi|\right)\right)^{-1}\left|\nabla^{N}_{\xi}\nabla_{x}a(x,\xi)\right|=A_{N}<\infty
\end{equation*}
for all nonnegative integer $N$, then $T_a$ is bounded from $L^{\infty}(\mathbb{R}^n)$ to $BMO(\mathbb{R}^n)$.
\end{theorem}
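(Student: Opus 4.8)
The plan is to verify the usual characterization of $L^{\infty}$–$BMO$ boundedness: for every cube $Q$ there is a constant $c_{Q}$ with $|Q|^{-1}\int_{Q}|T_{a}f-c_{Q}|\,dx\lesssim\|f\|_{\infty}$, with implicit constant independent of $f$, of $Q$, and (as in the reduction of Stein recalled above) of the size of $\operatorname{supp}a$. Fix $Q$ with center $x_{0}$ and side length $\ell$, and take a Littlewood--Paley decomposition $a=\sum_{j\geq 0}a_{j}$ with $a_{j}$ supported in $\{|\xi|\sim 2^{j}\}$ for $j\geq 1$, so that $|\nabla_{\xi}^{N}\nabla_{x}^{M}a_{j}|\lesssim 2^{j(n(\rho-1)/2-\rho N+M)}$ and, crucially, $|\nabla_{\xi}^{N}\nabla_{x}a_{j}|\lesssim 2^{j(n(\rho-1)/2-\rho N)}w(2^{j})$. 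The argument rests on two devices: decomposing $Q$ into congruent subcubes of a side length $\ell_{*}$ equal to a suitable power of $\ell$, so that the analysis is carried out at the finer scale $\ell_{*}$ and then reassembled; and freezing the $x$-variable of the symbol at the center $x_{i}$ of each subcube $Q_{i}$. The $x$-regularity hypothesis is used precisely here: by the mean value theorem, for $x$ in a bounded dilate of $Q_{i}$ one has $|\nabla_{\xi}^{N}(a(x,\xi)-a(x_{i},\xi))|\lesssim \ell_{*}(1+|\xi|)^{n(\rho-1)/2-\rho N}w(|\xi|)$, while $\nabla_{x}(a(x,\xi)-a(x_{i},\xi))=\nabla_{x}a(x,\xi)$ still carries the weighted bound for all $x$.

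First I would dispose of the frozen operators $T_{a(x_{i},\cdot)}$. Their symbols lie in $S^{n(\rho-1)/2}_{\rho,0}$ uniformly in $x_{i}$ and, since $\rho<1$, have nonpositive order, so each $T_{a(x_{i},\cdot)}$ is $L^{2}$-bounded; moreover its convolution kernel is a strongly singular kernel satisfying the appropriate H\"ormander-type integral condition, hence $T_{a(x_{i},\cdot)}$ is bounded from $L^{\infty}$ to $BMO$ with a norm independent of $x_{i}$. (For $x$-independent symbols of order $\leq 0$ with $\rho<1$ this is the strongly-singular-integral endpoint estimate; it follows from, or is proved exactly as, the weak-$(1,1)$/$H^{1}$--$L^{1}$ theory of such operators, dual to the $L^{\infty}$--$BMO$ statement.) This furnishes, on each $Q_{i}$, a constant $c_{i}$ with $|Q_{i}|^{-1}\int_{Q_{i}}|T_{a(x_{i},\cdot)}f-c_{i}|\lesssim\|f\|_{\infty}$. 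To pass from these local constants to a single $c_{Q}$ one must estimate $|c_{i}-c_{i'}|$ for neighbouring subcubes, which reduces (up to kernel-decay terms) to bounding $\|T_{a(x_{i},\cdot)-a(x_{i'},\cdot)}f\|_{\infty}$; since $a(x_{i},\cdot)-a(x_{i'},\cdot)$ is an $x$-independent symbol of size $O\!\bigl(\ell_{*}(1+|\xi|)^{n(\rho-1)/2}w(|\xi|)\bigr)$, this is where an $L^{\infty}$ estimate of Kenig--Staubach type (for $x$-independent symbols just below the critical order) combined with the smallness $\ell_{*}$ and the weight is invoked, and the chains of such differences together with the dyadic pieces are kept summable by means of \eqref{6}.

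It remains to control the remainder, namely the action of $T_{b_{i}}$ with $b_{i}(x,\xi)=a(x,\xi)-a(x_{i},\xi)$ on each $Q_{i}$. Here I split $f=f_{1}+f_{2}$, with $f_{1}$ the restriction of $f$ to a bounded dilate of $Q_{i}$. The contribution of $f_{2}$ is negligible: the kernels of the pieces $T_{b_{i,j}}$ have rapid off-diagonal decay on the scale $2^{-j\rho}$, so a dyadic--annular decomposition of the complement handles it. For $f_{1}$, only $b_{i}(x,\cdot)$ with $x\in Q_{i}$ is relevant, so one may replace $b_{i}$ by $\eta_{Q_{i}}b_{i}$ with $\eta_{Q_{i}}$ a smooth cutoff that is $1$ on $Q_{i}$ and supported in $2Q_{i}$ (its derivatives cost $\ell_{*}^{-1}$, harmless at this scale), and then Cauchy--Schwarz gives
\[
\frac{1}{|Q_{i}|}\int_{Q_{i}}|T_{\eta_{Q_{i}}b_{i}}f_{1}|\,dx\leq |Q_{i}|^{-1/2}\,\|T_{\eta_{Q_{i}}b_{i}}\|_{L^{2}\to L^{2}}\,\|f_{1}\|_{2}.
\]
Thus one needs an $L^{2}\to L^{2}$ bound for $T_{\eta_{Q_{i}}b_{i}}=\sum_{j}T_{\eta_{Q_{i}}b_{i,j}}$, obtained by an almost-orthogonality (Cotlar--Stein) argument: a single integration by parts in the variable appearing in the $TT^{*}$/$T^{*}T$ kernels produces the decay required, which is essential because only one $x$-derivative of the symbol is under control (the standard $L^{2}$ theory would use of order $n/2$). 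This yields $\|T_{\eta_{Q_{i}}b_{i,j}}\|_{L^{2}\to L^{2}}\lesssim \ell_{*}\,2^{jn(\rho-1)/2}w(2^{j})\cdot 2^{j\gamma}$ for an explicit exponent $\gamma$, after which one sums over $j$ (and over the subcubes and annuli). In that summation one uses H\"older's inequality with exponent $u$ and the convergence of $\sum_{j}(w(2^{j})/2^{j})^{u}$ coming from \eqref{6}, and the hypothesis $u\in(0,n/(n+2))$ is exactly what makes the complementary exponent's sum converge while absorbing the loss $|Q_{i}|^{-1/2}\|f_{1}\|_{2}\lesssim(\ell(\text{dilate})/\ell_{*})^{n/2}\|f\|_{\infty}$ and the frequency-volume factor $2^{jn\rho}$.

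The step I expect to be the main obstacle is this $L^{2}$ almost-orthogonality estimate for the pieces $T_{\eta_{Q_{i}}b_{i,j}}$ run with a single $x$-derivative together with the weighted bounds, and the accompanying bookkeeping: producing the exponent $\gamma$ and getting the $\ell_{*}$-gain, the volume factor $2^{jn\rho}$ (equivalently the $|Q_{i}^{*}|/|Q_{i}|$ loss from Cauchy--Schwarz), and the $u$-th power of $w(2^{j})/2^{j}$ to balance so that all the dyadic, sub-cube and annular sums converge. It is precisely this balance that singles out the range $u<n/(n+2)$.
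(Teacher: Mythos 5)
Your outline correctly isolates two of the paper's organizing ideas, namely subdividing the cube and freezing the $x$-variable at subcube centers to exploit the $x$-regularity hypothesis, together with the near/far split of $f$. But the step you single out as the main obstacle, an almost-orthogonality (Cotlar--Stein) $L^{2}$ bound for $T_{\eta_{Q_{i}}b_{i}}$ with $b_{i}(x,\xi)=a(x,\xi)-a(x_{i},\xi)$, is not needed, and that is the central miss. In the paper the difference term is handled by an $L^{\infty}$ bound: by Cauchy--Schwarz in $y$ (with $x$ held fixed) and Plancherel, one gets, for a symbol $b(x,\xi)$ supported in $|\xi|\lesssim 2^{j}$ with only $\xi$-derivative control $|\partial_{\xi}^{\alpha}b|\le A_{j}2^{-j\rho|\alpha|}$ up to order $n$, the bound $\|T_{b}f\|_{p}\le CA_{j}2^{jn(1-\rho)/2}\|f\|_{p}$ for all $2\le p\le\infty$ (Lemma~\ref{l2.3}). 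Crucially this uses no $x$-derivatives and no $x$-regularity beyond a size bound, so the $\delta=1$ nature of the symbol never enters; the gain for the difference term is then $l_{j}\cdot w(2^{j+1})$ from mean value plus one controlled $x$-derivative. A genuine $L^{2}$ almost-orthogonality argument does appear in the paper, but only in the proof of the separate $L^{2}$ theorem, not here.

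Two further structural differences would cause trouble if you tried to execute your plan literally. First, your subcube size $\ell_{*}$ is a single power of $\ell(Q)$, whereas the paper's subcubes have side length $l_{j}=2^{-ju}\,w(2^{j+2})^{u-1}$ at dyadic frequency $2^{j}$, and the enlarged cube has size $L_{j}l(Q_{j,k})$ with $L_{j}=2+2^{j(1-\rho)}\bigl(w(2^{j+2})/2^{j}\bigr)^{2u/n}$. The restriction $u\in(0,n/(n+2))$ is exactly what balances the three contributions (difference term $\sim l_{j}w(2^{j})$, near term $\sim 2^{jn(\rho-1)/2}L_{j}^{n/2}$, far term via kernel decay over distance $\gtrsim L_{j}l_{j}$), and that balance requires $l_{j}$ and $L_{j}$ to depend on $j$; a frequency-independent $\ell_{*}$ cannot achieve it. Second, your plan to patch together the local constants $c_{i}$ by estimating $|c_{i}-c_{i'}|$ through $\|T_{a(x_{i},\cdot)-a(x_{i'},\cdot)}f\|_{\infty}$ is avoided entirely: the paper first splits $T_{a}f$ at a frequency threshold $j_{Q}$ defined through $\int_{1}^{2^{j_{Q}+2}}w(t)\,dt/t \le 1/l(Q)$, does a single freezing at $x_{Q}$ for the low-frequency part (supplying one $\lambda_{Q}$ via the known $L^{\infty}$--$BMO$ bound for $S^{n(\rho-1)/2}_{\rho,0}$ symbols), and shows the high-frequency part is bounded in mean over $Q$ without any constant subtraction. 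Trying to telescope the $c_{i}$ as you propose would require an $L^{\infty}$ bound for the operator with $x$-independent symbol $a(x_{i},\xi)-a(x_{i'},\xi)$ of order $n(\rho-1)/2$, which in general fails (the paper cites that $T_{a}$ with $a\in S^{n(\rho-1)/2}_{\rho,0}$ $x$-independent need not be $L^{\infty}$-bounded), so that patching route is an actual gap, not merely a bookkeeping nuisance.
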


\begin{rmk}
The condition $\rho<1$ is assumed here for some technical reason. When $\rho=1$, this theorem still holds  (and we actually have a better version). See the last remark of this section.
\end{rmk}

\begin{rmk}
Our theorem works for a class of general functions $w$. It include some known results in literature corresponding to specific $w$, for example, in \cite{AH90} (with $w(t)=t^{\delta}$, $0\leq \delta\leq \rho$ and $\delta<1$),  \cite{W, WC} (with $w(t)=t^{\delta}$,  $0\leq\delta<1$) and \cite{RZ23} (with $w(t)=t/\log^6 (1+t)$). It also works for some ``weaker'' functions, for example $w(t)=t/\log^{\frac{n+2}{n}+\epsilon}(1+t)$  with $\epsilon>0$.
\end{rmk}

%\cite{WC} (with $w(t)=t^{\delta}$, $0<\rho\leq 1$, $0\leq\delta<1$ and $\delta\leq (1+\rho)/2$)

\begin{rmk}
The idea to prove this theorem is as follows.  To estimate the $BMO$ norm of $T_a f$, we estimate $\inf_c \frac{1}{|Q|}\int_Q\!|T_a f(x)-c|\textrm{d}x$ for any cube $Q$. We first make a standard decomposition of  $T_a f(x)$ on the frequency side.  Depending on the size of frequency, the side length of $Q$ and the function $w$, we may further decompose the cube $Q$ into an almost disjoint union of smaller cubes of the same side length, $Q=\cup_{k=1}^{K} Q_{j,k}$.  Over each $Q_{j,k}$,  we localize the $x$-variable of symbol $a(x,\xi)$ to the center of $Q_{j,k}$ and split the function $f$ into two parts with one restricted to an enlarged concentric cube $\widetilde{Q}_{j,k}$ and another restricted to the complement of $\widetilde{Q}_{j,k}$. With all these decompositions, the average over $Q$ is split into four parts. To estimate them, we use the $x$-regularity assumption on symbol $a$, certain known $L^2$ and $L^{\infty}$-$BMO$ boundedness and an $L^\infty$ boundedness we prove in Lemma \ref{l2.3}. At last we balance bounds of different parts to obtain the desired one.
\end{rmk}

\begin{rmk}
Only finitely many derivatives of $a$ are needed in this theorem. In this paper we do not pursue the best order of derivatives needed to guarantee the conclusion, which depends on $n$ and $u$.
\end{rmk}

By an almost orthogonality argument we obtain the following.

\begin{theorem}\label{main1}
	Suppose that $w$ is a function from $(0,\infty)$ to $(0,\infty)$ and there exists a constant $A>1$ such that
	\begin{equation}
	\int^{\infty}_1\frac{w^2(t)}{t^3}\,\textrm{d}t\leq A \label{2}
	\end{equation}
and $w(t_1)\leq Aw(t_2)$ whenever $0< t_1\leq t_2$.

If the symbol $a\in S^{n(\rho-1)/2}_{\rho,1}$, $0\leq\rho\leq 1$, satisfies that
	\begin{equation*}
	\sup_{x,\xi\in\mathbb{R}^{n}}(1+|\xi|)^{-n(\rho-1)/2+\rho N}\left(w\left(|\xi|\right)\right)^{-1}\left|\nabla^{N}_{\xi}\nabla_{x}a(x,\xi)\right|=A_{N}<\infty
	\end{equation*}
	for all nonnegative integer $N\leq n$, then $T_a$ is bounded on $L^{2}(\mathbb{R}^n)$.
\end{theorem}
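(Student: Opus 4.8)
The plan is a two–stage almost-orthogonality argument. I would first perform a Littlewood--Paley decomposition in the frequency variable, $a=\sum_{j\ge0}a_j$ with $a_j$ supported in $|\xi|\sim2^j$ (and $|\xi|\lesssim1$ when $j=0$), so that $T_a=\sum_j T_{a_j}$, each $T_{a_j}=T_{a_j}\widetilde\Delta_j$ acting only on the frequency shell $|\xi|\sim2^j$. Then, for each $j$, I would decompose the symbol once more, now in $x$: $a_j=\sum_{\nu\ge0}a_j^\nu$ with $a_j^\nu=\phi_\nu\ast_x a_j$ having spatial Fourier support in $|\theta|\sim2^\nu$ (resp. $|\theta|\lesssim1$ for $\nu=0$). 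The utility of this second decomposition lies in two estimates, both uniform in the size of $\operatorname{supp}a$: from $a\in S^{n(\rho-1)/2}_{\rho,1}$ one has $\|\nabla_\xi^N a_j^\nu(\cdot,\xi)\|_\infty\lesssim 2^{jn(\rho-1)/2}2^{-j\rho N}$ for every $N$, and hence (since that class caps the spatial oscillation of $a_j$ at scale $2^{-j}$) the pieces with $\nu$ much larger than $j$ are negligible; while Bernstein's inequality in $x$ together with the extra hypothesis on $\nabla_\xi^N\nabla_x a$ sharpens this, for $N\le n$, to
\[
\|\nabla_\xi^N a_j^\nu(\cdot,\xi)\|_\infty\lesssim 2^{jn(\rho-1)/2}2^{-j\rho N}\min\bigl(1,\,2^{-\nu}w(2^j)\bigr).
\]

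Next I would regroup the double sum $T_a=\sum_{j,\nu}T_{a_j^\nu}$ by the frequency support of the output: $\widehat{T_{a_j^\nu}f}$ lives (modulo rapidly decaying tails) in $|\zeta|\sim2^j$ when $\nu\le j-C_0$ and in $|\zeta|\sim2^\nu$ when $\nu\ge j+C_0$, the remaining ``diagonal'' pieces $|\nu-j|<C_0$ being grouped with the former. Orthogonality of frequency shells then reduces matters to (i) a uniform-in-$j$ bound for $\bigl\|\sum_{\nu\lesssim j}T_{a_j^\nu}\bigr\|_{L^2\to L^2}$, and (ii) finiteness of $\sum_{j\lesssim\nu}\|T_{a_j^\nu}\|_{L^2\to L^2}^2$ over the high-oscillation pairs. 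For the low-oscillation pieces with $\nu\le j\rho$ one has a symbol in $S^{n(\rho-1)/2}_{\rho,\rho}$ with $n(\rho-1)/2\le0$, which is $L^2$-bounded with a uniform constant by Calder\'on--Vaillancourt \cite{CV71,CV72}; for $j\rho<\nu$ the symbol $a_j^\nu$ lies in $S^{n(\rho-1)/2}_{\rho,\nu/j}$ with exponent $\nu/j<1$ and seminorm $\lesssim\min(1,2^{-\nu}w(2^j))$, so Calder\'on--Vaillancourt still applies, the $\min$-decay compensating for the deterioration of the constant as $\nu/j\to1$. The crucial quantitative input for (ii) is the $L^2$-type summability $\sum_{j\ge0}\bigl(2^{-j}w(2^j)\bigr)^2<\infty$, which follows from \eqref{2} and the almost-monotonicity of $w$ (comparing $\int_1^\infty w^2(t)t^{-3}\,\mathrm dt$ with the dyadic sum); it is precisely matched by the bound $\|T_{a_j^\nu}\|_{L^2\to L^2}\lesssim 2^{-\nu}w(2^j)$, up to polynomial factors in $j$, that one aims to prove for the pairs with $\nu\gtrsim j$.

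The hard part is obtaining that last bound — i.e. controlling the pieces $T_{a_j^\nu}$ in the regime where the symbol's spatial frequency $2^\nu$ is comparable to or exceeds its frequency localisation $2^j$. This is the ``forbidden'' $\delta=1$ regime underlying Rodino's counterexample \cite{R76}: Calder\'on--Vaillancourt is unavailable, and the naive Schur/kernel estimate loses a factor $2^{jn(1-\rho)/2}$ that cannot be recovered from size and $\xi$-regularity alone. One must instead keep the pieces grouped and run a Cotlar--Stein argument on $\{T_{a_j^\nu}\}$, where the bookkeeping of which frequency shells meet forces most compositions $T_{a_j^\nu}(T_{a_{j'}^{\nu'}})^*$ and $T_{a_j^\nu}^*T_{a_{j'}^{\nu'}}$ to vanish and the surviving ones carry a smallness factor of the form $\sim2^{-\max(\nu,\nu')}w(\cdot)$ coming from the improved $x$-regularity; integrating the $\xi$-smoothness of the symbols supplies the off-diagonal decay needed to sum the Cotlar--Stein series, and the total converges exactly against \eqref{2}. (When $\rho=1$ the frequency-localisation scale and the $x$-oscillation cap coincide and the bookkeeping streamlines, which is why a stronger statement is available in that case.)
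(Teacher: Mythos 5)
Your overall strategy --- a Littlewood--Paley decomposition in $\xi$, then a second decomposition of each $a_j$ according to its $x$-Fourier support, and then orthogonality of frequency shells to sum --- is the same strategy the paper uses, but the paper renders it in a cleaner and less fraught way.  The paper does not do a full dyadic decomposition in $x$: it mollifies, writing $a_j=a_j\ast_x\Psi_j+\widetilde a_j$ with $\widehat{\Psi_j}$ supported in $|\theta|\le 2^{j}/100$, so that the first term has $x$-Fourier support well inside the $\xi$-shell scale and the second term inherits, directly from the Lipschitz estimate $\nabla_x a$, the sharp smallness factor $2^{-j}w(2^j)$.  Your $a_j^\nu$-decomposition contains exactly the same information, but forces you to keep track of two indices and breeds the ``intermediate regime'' you then struggle with.

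Two of your technical claims are misdirected and together they cause you to invent a harder problem than actually exists.  First, your assertion that for $\nu\gtrsim j$ ``the naive Schur/kernel estimate loses a factor $2^{jn(1-\rho)/2}$ that cannot be recovered'' is wrong once the $x$-Fourier decomposition has been made: the improved size bound $\|\partial_\xi^\alpha a_j^\nu\|_\infty\lesssim 2^{jn(\rho-1)/2}2^{-j\rho|\alpha|}\,2^{-\nu}w(2^j)$, fed into exactly the Schur/Plancherel estimate that the paper isolates as Lemma~\ref{l2.3}, gives $\|T_{a_j^\nu}\|_{L^2\to L^2}\lesssim 2^{-\nu}w(2^j)$ with \emph{no} excess factor, because the factor $2^{jn(1-\rho)/2}$ coming from the volume of the $\xi$-shell is cancelled by the $2^{jn(\rho-1)/2}$ in the symbol size.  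After that, no Cotlar--Stein on the double array is needed in this regime: the pieces with $\nu\ge j-4$ all act on the input shell $|\xi|\sim2^j$, so triangle inequality in $\nu$ and then Cauchy--Schwarz against $\sum_j\|\triangle'_jf\|_2^2\lesssim\|f\|_2^2$ gives
$\sum_j\bigl(\sum_{\nu\ge j-4}2^{-\nu}w(2^j)\bigr)\|\triangle'_jf\|_2\lesssim\bigl(\sum_j 4^{-j}w(2^j)^2\bigr)^{1/2}\|f\|_2$,
which is exactly dyadic-equivalent to hypothesis~\eqref{2}, with no ``polynomial factors in $j$'' at all.  Your willingness to tolerate polynomial factors in $j$ is itself a warning sign: \eqref{2} has only logarithmic slack (e.g.\ $w(t)=t/\sqrt{\log t\,(\log\log t)^{2}}$ satisfies it), so any genuinely polynomial loss would destroy the theorem.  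Second, invoking Calder\'on--Vaillancourt for $a_j^\nu\in S^{n(\rho-1)/2}_{\rho,\nu/j}$ with $\nu/j\to1$ is needlessly delicate: the constant in that theorem degenerates as $\delta\to\rho$ (resp.\ $\delta\to1$), and you supply no quantitative rate to be balanced against your $\min$-decay. The paper avoids this entirely: for the mollified part $b=\sum_j a_j\ast_x\Psi_j$ it simply checks that the pieces $Q_j$ have both input and output localised to the $j$-th shell (because the $x$-Fourier support of $a_j\ast_x\Psi_j$ is $\ll 2^j$), and applies Cotlar's lemma (or, equivalently, output/input shell orthogonality) using only the crude uniform operator bound from Lemma~\ref{l2.3}.

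So: same skeleton, but your version replaces a one-line mollification by a two-parameter dyadic decomposition, replaces an elementary Schur estimate by a borderline Calder\'on--Vaillancourt appeal whose constant you do not control, and finally posits a Cotlar--Stein scheme on a two-index family whose ``bookkeeping'' and ``smallness factors'' remain unspecified precisely at the point where you yourself flag the proof as hard.  The fix that closes all of these at once is to realise that, once the spatial-frequency gain $2^{-\nu}w(2^j)$ is in hand, the naive Schur/Plancherel bound already suffices for the high-$\nu$ pieces, and only the mollified low-$\nu$ part needs shell orthogonality --- which is exactly what the paper does.
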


Since \eqref{6} implies that $w(t)/t$ is uniformly bounded when $t\geq 1$ (see inequality \eqref{w1} below), assumptions of Theorem \ref{main3} imply those of Theorem \ref{main1}. Applying interpolation and Theorems \ref{main3} and \ref{main1}, we obtain the following.
\begin{theorem}\label{main2}
Under assumptions of Theorem \ref{main3}, $T_a$ is bounded on $L^p(\mathbb{R}^n)$ for $2\leq p<\infty$.
\end{theorem}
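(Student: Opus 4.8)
The plan is to derive Theorem~\ref{main2} from the two boundedness statements already in hand by interpolation, the case $p=2$ being already contained in Theorem~\ref{main1}. As observed in the paragraph following Theorem~\ref{main1}, the assumptions of Theorem~\ref{main3} force $w(t)/t$ to be bounded for $t\ge1$ and hence imply hypothesis~\eqref{2}; consequently, under the assumptions of Theorem~\ref{main3} the operator $T_a$ is simultaneously bounded on $L^2(\mathbb R^n)$ (Theorem~\ref{main1}) and bounded from $L^\infty(\mathbb R^n)$ to $BMO(\mathbb R^n)$ (Theorem~\ref{main3}). So it remains to treat $2<p<\infty$, and the mechanism will be the standard interpolation principle that an operator bounded on $L^2$ and from $L^\infty$ to $BMO$ is bounded on $L^p$ for $2\le p<\infty$.

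Concretely I would pass through the Fefferman--Stein sharp maximal function $g^{\#}$. Consider the sublinear operator $Sf:=(T_af)^{\#}$. On one hand, $\|Sf\|_{L^\infty}=\|(T_af)^{\#}\|_{L^\infty}\lesssim\|T_af\|_{BMO}\lesssim\|f\|_{L^\infty}$ by Theorem~\ref{main3}. On the other hand, using the pointwise bound $g^{\#}\le 2Mg$, the $L^2$ boundedness of the Hardy--Littlewood maximal operator $M$, and Theorem~\ref{main1}, one gets $\|Sf\|_{L^2}\lesssim\|M(T_af)\|_{L^2}\lesssim\|T_af\|_{L^2}\lesssim\|f\|_{L^2}$. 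The Marcinkiewicz interpolation theorem (applicable since $S$ is sublinear and of strong types $(2,2)$ and $(\infty,\infty)$) then yields $\|(T_af)^{\#}\|_{L^p}\lesssim\|f\|_{L^p}$ for every $2\le p<\infty$.

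To recover the $L^p$ bound for $T_a$ itself I would first restrict to $f\in\mathscr S(\mathbb R^n)$, so that $T_af$ is defined by \eqref{1} and, by Theorem~\ref{main1}, lies in $L^2(\mathbb R^n)$. Since $T_af\in L^2$ and $p\ge2$, the Fefferman--Stein inequality $\|g\|_{L^p}\lesssim\|g^{\#}\|_{L^p}$ is legitimately applied to $g=T_af$, giving $\|T_af\|_{L^p}\lesssim\|(T_af)^{\#}\|_{L^p}\lesssim\|f\|_{L^p}$ for all $f\in\mathscr S(\mathbb R^n)$; since $\mathscr S(\mathbb R^n)$ is dense in $L^p(\mathbb R^n)$, $T_a$ extends uniquely to a bounded operator on $L^p(\mathbb R^n)$, $2\le p<\infty$. (Alternatively one could interpolate directly in the complex method, using $T_a\colon L^2\to L^2$, $T_a\colon L^\infty\to BMO$, and the interpolation identities $[L^2,L^\infty]_\sigma=L^q=[L^2,BMO]_\sigma$ with $1/q=(1-\sigma)/2$.) Since the substantive content is carried by Theorems~\ref{main3} and~\ref{main1}, there is no serious obstacle here; the only two points needing a little care are the implication \eqref{6}$\Rightarrow$\eqref{2}, which makes Theorem~\ref{main1} available, and the a priori membership $T_af\in L^2$ that justifies the Fefferman--Stein step prior to the density argument.
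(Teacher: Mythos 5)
Your proposal is correct and matches the paper's approach, which states only that the result follows from Theorems~\ref{main3} and~\ref{main1} by interpolation; your use of the Fefferman--Stein sharp maximal function is exactly the standard way to carry out that interpolation between an $L^2$ bound and an $L^\infty$-to-$BMO$ bound. You also supply the two small but necessary checks that the paper only indicates in passing: that \eqref{6} implies \eqref{2} via the uniform bound on $w(t)/t$, and that $T_af\in L^2$ a priori so that the Fefferman--Stein inequality $\|g\|_{L^p}\lesssim\|g^{\#}\|_{L^p}$ is legitimately invoked before densifying.
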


\begin{rmk}
When $a\in S^0_{1,1}$, it is well-known that the integral kernel
\begin{equation*}
	k(x,y)=\int_{\mathbb{R}^n}\!e^{2\pi i(x-y)\cdot \xi}a(x,\xi)\,\textrm{d}\xi
\end{equation*}
associated to $T_a$ satisfies the H\"{o}rmander condition. So the $L^2$ boundedness of $T_a$ yields the $L^{\infty}$-$BMO$, $H^1$-$L^1$ and $L^1$-$L^{1,\infty}$ boundedness directly by the standard
Calder\'{o}n-Zygmund theory. Therefore, when $\rho=1$ under assumptions of Theorem \ref{main1} (or stronger assumptions of Theorem \ref{main3}) we obtain conclusions of Theorems \ref{main3} and \ref{main2} immediately.
\end{rmk}

Throughout this note, we use $C$ to denote a positive constant, which may vary from line to line and depend only on $n,\rho,u,A$ and finitely many seminorms of $a$.

%%%%%%%%%%%%%%%%%%%%%%%%%%%%%%%%%%%%%%%%%%%%%%%%%%%%%%%%%%%

\section{Preliminaries}\label{s2}

We first recall two fundamental results on pseudo-differential operators.

\begin{lemma}[H\"{o}rmander \!\cite{H71}, Calder\'{o}n-Vaillancourt \!\cite{CV71}, Hounie \!\cite{H86}]  \label{l2.1}
		If\\
		 $0\leq\rho\leq 1$ and $a\in S^{0}_{\rho,0}$, then
	\begin{equation*}
		\|T_{a}f\|_{2}\leq C\|f\|_{2},
	\end{equation*}
	where the constant $C$ depends only on $n$, $\rho$ and finitely many seminorms of $a$ in $S^{0}_{\rho,0}$.
\end{lemma}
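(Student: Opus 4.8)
The plan is to reduce immediately to the case $\rho=0$ and then invoke the Cotlar--Stein almost orthogonality lemma after decomposing the symbol on a unit lattice in both the $x$- and $\xi$-variables. Since $\rho\ge 0$ gives $S^{0}_{\rho,0}\subseteq S^{0}_{0,0}$ with no increase of the relevant seminorms, it suffices to treat $a\in S^{0}_{0,0}$, i.e.\ $a$ and all of its derivatives are bounded. Fix $\psi,\varphi\in C_c^{\infty}(\mathbb{R}^n)$ with $\sum_{j\in\mathbb{Z}^n}\psi(x-j)\equiv 1$ and $\sum_{k\in\mathbb{Z}^n}\varphi(\xi-k)\equiv 1$, set $a_{j,k}(x,\xi)=\psi(x-j)\varphi(\xi-k)a(x,\xi)$ and $T_{j,k}=T_{a_{j,k}}$; then $T_a=\sum_{j,k}T_{j,k}$ on $\mathscr{S}(\mathbb{R}^n)$ (the interchange of sum and integral being routine, using the rapid decay of $\widehat f$ and the boundedness of $a$), and each $a_{j,k}$ is smooth, supported in a box of bounded size centered at $(j,k)$, with all derivatives bounded uniformly in $j,k$.

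The core of the argument is the almost orthogonality estimate
\[
\|T_{j,k}^{*}T_{j',k'}\|_{2\to 2}+\|T_{j,k}T_{j',k'}^{*}\|_{2\to 2}\le C_N\,(1+|j-j'|)^{-N}(1+|k-k'|)^{-N}
\]
for every $N$. In each of the two products one factor of decay comes for free from disjoint supports and the other from a non-stationary phase estimate. For $T_{j,k}^{*}T_{j',k'}$: the range of $T_{j',k'}$ sits in $\{|x-j'|\le C\}$ while the kernel of $T_{j,k}^{*}$ carries the factor $\overline{\psi(y-j)}$, so the composition vanishes unless $|j-j'|\le C$; passing to the Fourier side, $\widehat{T_{j,k}^{*}T_{j',k'}f}(\eta)$ is localized to $\{|\eta-k|\le C\}$ and the remaining $y$-integral is the Fourier transform at $\eta-\xi$ of a smooth function of bounded support with uniformly bounded derivatives, hence is $O_N((1+|\eta-\xi|)^{-N})=O_N((1+|k-k'|)^{-N})$ after using $|\eta-k|\le C$ and $|\xi-k'|\le C$; Plancherel and Cauchy--Schwarz finish this case. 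Dually, $T_{j,k}T_{j',k'}^{*}$ vanishes identically unless the $\xi$-supports of the two factors meet, i.e.\ unless $|k-k'|\le C$, and when they meet, integrating by parts repeatedly in $\xi$ bounds the kernel of the composition by $\psi(x-j)(1+|j-j'|)^{-N}$ times a tail integrable in $y$ and localized near $j'$, so Schur's test supplies the decay in $|j-j'|$.

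Since $\sum_{j',k'}(1+|j-j'|)^{-N/2}(1+|k-k'|)^{-N/2}$ converges uniformly in $(j,k)$ once $N>2n$, the Cotlar--Stein lemma applies to $\{T_{j,k}\}$ and yields $\|T_a\|_{2\to 2}=\|\sum_{j,k}T_{j,k}\|_{2\to 2}\le C$, with $C$ depending only on $n$ and finitely many $S^{0}_{0,0}$ seminorms of $a$, hence only on $n$, $\rho$ and finitely many $S^{0}_{\rho,0}$ seminorms. I expect the main obstacle to be the bookkeeping in the almost orthogonality step — keeping straight which pair of indices is controlled by the free support decay and which by the oscillatory-integral estimate, and checking that the integration-by-parts constants involve only finitely many derivatives of $a$; the remaining details are routine.
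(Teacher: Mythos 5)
The paper does not prove Lemma~\ref{l2.1}; it is quoted as a known result with citations to H\"{o}rmander, Calder\'{o}n--Vaillancourt, and Hounie, so there is no internal proof to compare against. Your argument is a correct sketch of the classical Calder\'{o}n--Vaillancourt proof of this $L^2$ bound: the reduction from $S^0_{\rho,0}$ to $S^0_{0,0}$ (valid since $\rho\ge 0$ only improves the $\xi$-derivative estimates, so the inclusion $S^{0}_{\rho,0}\subseteq S^{0}_{0,0}$ holds with no loss of seminorms), the unit-lattice tiling in both $x$ and $\xi$, and the two-sided almost-orthogonality estimate fed into Cotlar--Stein. Your bookkeeping in the almost-orthogonality step is right: for $T_{j,k}^{*}T_{j',k'}$ the $x$-supports force $|j-j'|\lesssim 1$ and the $(1+|k-k'|)^{-N}$ decay comes from the non-stationary phase in $x$ (Fourier transform of a uniformly bounded $C^\infty_c$ function at frequency $\eta-\xi$), while for $T_{j,k}T_{j',k'}^{*}$ the $\xi$-supports force $|k-k'|\lesssim 1$ and integration by parts in $\xi$ yields the $(1+|j-j'|)^{-N}$ decay, with Schur's test closing each case; taking $N>2n$ then makes the Cotlar--Stein series converge. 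The constants depend only on $n$ and finitely many $S^0_{0,0}$ (hence $S^0_{\rho,0}$) seminorms, as claimed.
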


\begin{lemma}[{\'{A}lvarez-Hounie \cite{AH90}, Stein \cite[p. 322, Sec. VII.5.12(h)]{S93}}]\label{l2.2}
	If  $0\leq \rho\leq 1$ and $a\in S^{n(\rho-1)/2}_{\rho,0}$, then
	$$\|T_{a}f\|_{BMO}\leq C\|f\|_{\infty},$$
	where the constant $C$ depends only on $n,\rho$ and	 finitely many seminorms of $a$ in $S^{n(\rho-1)/2}_{\rho,0}$.
\end{lemma}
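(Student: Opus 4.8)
The plan is to deduce this from the $H^1$-$L^1$ boundedness of the adjoint together with the duality $(H^1)^*=BMO$, which is essentially the route of the cited references \cite{AH90,S93}. When $\rho>0$ we have $\delta=0<\rho$, so the standard symbolic calculus applies: $T_a^*=T_b+R$ with $b\in S^{n(\rho-1)/2}_{\rho,0}$ and $R$ a smoothing operator. Since $R:H^1\to L^1$ trivially, it suffices to prove $\|T_b\mathfrak a\|_1\le C$ uniformly over $H^1$-atoms $\mathfrak a$: then, for $f\in L^\infty$ and $g$ a finite linear combination of atoms, $|\langle T_af,g\rangle|=|\langle f,T_a^*g\rangle|\le\|f\|_\infty\|T_a^*g\|_1\le C\|f\|_\infty\|g\|_{H^1}$, and a density argument identifies $T_af$ with a $BMO$ function of norm $\lesssim\|f\|_\infty$. (The borderline case $\rho=0$ is treated by the same atomic argument below, using that the adjoint of an operator with symbol in $S^{-n/2}_{0,0}$ again has symbol in that class.)

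To estimate $\|T_b\mathfrak a\|_1$, let $\mathfrak a$ be supported in a cube $Q$ of side $\ell$ with $\|\mathfrak a\|_\infty\le|Q|^{-1}$ and $\int\mathfrak a=0$, and write $b=\sum_{j\ge0}b_j$ with $b_j$ localized to $|\xi|\approx2^j$. On $2Q$ I would use the $L^2$-boundedness of $T_b$ (its order $n(\rho-1)/2$ is $\le0$, which reduces it to Lemma~\ref{l2.1}): $\int_{2Q}|T_b\mathfrak a|\le|2Q|^{1/2}\|T_b\mathfrak a\|_2\lesssim|2Q|^{1/2}\|\mathfrak a\|_2\lesssim1$. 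On $(2Q)^c$ I would argue frequency by frequency, using: (i) $\|T_{b_j}\|_{L^2\to L^2}\lesssim2^{jn(\rho-1)/2}$ (write $b_j=2^{jn(\rho-1)/2}\tilde b_j$ with $\tilde b_j$ bounded in $S^0_{\rho,0}$ uniformly in $j$ and invoke Lemma~\ref{l2.1}); (ii) by non-stationary phase the kernel of $T_{b_j}$ is concentrated, up to rapidly decaying tails, within distance $\sim2^{-j\rho}$ of the diagonal, so $T_{b_j}\mathfrak a$ is essentially supported in the $2^{-j\rho}$-neighbourhood of $Q$ and has off-diagonal $L^2$-decay at that scale; (iii) the cancellation $\int\mathfrak a=0$ yields $\|\widehat{\mathfrak a}\|_{L^2(|\xi|\approx2^j)}\lesssim\min\{1,(2^j\ell)^{n/2+1}\}\|\mathfrak a\|_2$. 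Feeding these into $\int_{(2Q)^c}|T_{b_j}\mathfrak a|\le(\textup{relevant measure})^{1/2}\|T_{b_j}\mathfrak a\|_2$ and summing the resulting geometric series over the range $2^{-j\rho}\gtrsim\ell$ (higher frequencies contribute only through rapidly decaying tails) gives $\int_{(2Q)^c}|T_b\mathfrak a|\lesssim1$, hence $\|T_b\mathfrak a\|_1\lesssim1$.

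The main obstacle is this last step on $(2Q)^c$: bounding it by integrating pointwise kernel estimates is too lossy and diverges as $\ell\to0$ once $\rho<1$, so one must genuinely exploit the $L^2$ structure of the pieces $T_{b_j}$ together with the moment condition on $\mathfrak a$, balancing the decay $2^{jn(\rho-1)/2}$ against the support size $2^{-j\rho n}$ and the low-frequency gain $(2^j\ell)^{n/2+1}$ across the full dyadic range of scales. The remaining ingredients---the $L^2$-bounds from Lemma~\ref{l2.1}, the stationary-phase kernel estimates, and the symbolic calculus for $T_a^*$---are routine.
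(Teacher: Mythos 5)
The paper itself does not prove Lemma~\ref{l2.2}; it is imported from \'Alvarez--Hounie and Stein, so there is no internal proof to compare against. Judged on its own merits, your duality/atomic strategy (pass to $T_a^*\colon H^1\to L^1$, reduce to a single atom, split $2Q$ versus $(2Q)^c$, and analyse $(2Q)^c$ frequency by frequency) is the standard and correct skeleton, and your treatment of $2Q$ and of the low-to-mid frequencies $2^{j\rho}\ell\lesssim 1$ is sound: the bound $(2^j\ell)^{-n/2}\min\{1,(2^j\ell)^{n/2+1}\}$ does sum to $O(1)$.

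The gap is precisely the high-frequency range $2^{j\rho}\ell\gtrsim 1$ that you dismiss in a parenthetical as ``rapidly decaying tails''. There the moment condition is useless (the cancellation gain $2^j\ell$ is $\ge1$, so it only hurts), and the pointwise kernel bound $|k_j(x,y)|\lesssim 2^{jn(1+\rho)/2}(1+2^{j\rho}|x-y|)^{-N}$ is genuinely lossy when $\rho<1$: it yields a Schur bound $2^{jn(1-\rho)/2}$ for $\|T_{b_j}\|_{L^2\to L^2}$ rather than the true $2^{jn(\rho-1)/2}$, and feeding it into $\int_{(2Q)^c}|T_{b_j}\mathfrak a|$ produces a contribution of size $\ell^{-n(1-\rho)/(2\rho)}$ near $j\approx\rho^{-1}\log_2(1/\ell)$, which blows up as $\ell\to0$ --- exactly the disease you diagnose, but the cure you propose (``$L^2$ structure $+$ moment condition'') does not reach this range. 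What is actually needed is an off-diagonal \emph{operator} bound that retains the cancellations in the kernel, for instance a weighted $L^2$ estimate of the form
$$
\int\bigl(1+2^{j\rho}\dist(x,Q)\bigr)^{2M}\bigl|T_{b_j}\mathfrak a(x)\bigr|^2\,\textrm{d}x\;\lesssim\;2^{jn(\rho-1)}\bigl(1+2^{j\rho}\ell\bigr)^{2M}\|\mathfrak a\|_2^2,
$$
obtained by conjugating $T_{b_j}$ with the weight and controlling the resulting finite chain of commutators (each commutator trades one $\xi$-derivative of $b_j$, costing $2^{-j\rho}$, against one $x$-derivative of the weight, gaining $2^{j\rho}$). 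Combined with $\int_{(2Q)^c}(1+2^{j\rho}\dist(x,Q))^{-2M}\textrm{d}x\lesssim(2^{j\rho}\ell)^{-2M}\ell^n$, this gives $\int_{(2Q)^c}|T_{b_j}\mathfrak a|\lesssim 2^{jn(\rho-1)/2}$, which sums over $j>\rho^{-1}\log_2(1/\ell)$ to $\ell^{n(1-\rho)/(2\rho)}\le1$. Without stating and using an estimate of this type (or its equivalent, say via a vector-valued/almost-orthogonality argument), ``rapidly decaying tails'' is an unjustified assertion, so the proposal as written has a real hole at the high frequencies.

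Two smaller remarks. First, you invoke the adjoint symbolic calculus to write $T_a^*=T_b+R$; note that this is unnecessary if you apply the atomic argument directly to $T_a^*$, since for $\delta=0\le\rho$ the adjoint symbol class is preserved (both in the asymptotic calculus for $\rho>0$ and via the $S^m_{0,0}$ closure under adjoints for $\rho=0$, as you note). Second, your $L^2$ step on $2Q$ implicitly uses Lemma~\ref{l2.1}, which is indeed available here because the order $n(\rho-1)/2\le0$ and $\delta=0$; that step is fine.
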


We will use the well-known Littlewood-Paley dyadic decomposition. Let $B_r$ be the ball in $\mathbb{R}^n$ centered at the origin with radius $r$. Take a nonnegative function $\eta\in C^{\infty}_{c}(B_{2})$ with $\eta\equiv 1$ on $B_{1}$ and
set $\varphi(\xi)=\eta(\xi)-\eta(2\xi)$. It is obvious that $\varphi$ is supported in $\{\xi\in \mathbb{R}^n: 1/2<|\xi|<2\}$ and
$$\eta(\xi)+\sum^{\infty}_{j=1}\varphi(2^{-j}\xi)=1, \textrm{ for all $\xi\in \mathbb{R}^n$}.$$
We denote functions $\varphi_0(\xi)=\eta(\xi)$ and $\varphi_j(\xi)=\varphi(2^{-j}\xi)$ for $j\geq 1$, and operators $\triangle_j$ and $S_j$ by
\begin{equation}\label{pddbmo2.1}
	\widehat{\triangle_jf}(\xi)=\varphi_j(\xi)\widehat{f}(\xi) \textrm{ and } S_jf=\sum^j_{k=0}\triangle_kf \textrm{ for } j\geq 0.
\end{equation}
We also use the following notations
\begin{equation}
	a_j(x,\xi)=a(x,\xi)\varphi_j(\xi) \label{5}
\end{equation}
and
\begin{equation}
T_{a,j}f(x)=T_{a}(\triangle_jf)(x)=\int \! e^{2\pi ix\cdot\xi}a(x,\xi)\varphi_j(\xi)\widehat{f}(\xi)\textrm{d}\xi \textrm{ for } j\geq 0.\label{3}
\end{equation}

For the pseudo-differential operator, we have the following estimate.
\begin{lemma}\label{l2.3}
	Let $n\in \mathbb{N}$, $0\leq\rho\leq 1$ and $j$ a nonnegative integer. If the symbol $b(x,\xi)$ is supported in $\{(x,\xi)\in\mathbb{R}^n\times \mathbb{R}^n:|\xi|<2^{j+1}\}$ and satisfies that
	\begin{equation*}
	\sup_{x,\xi\in\mathbb{R}^{n}}\left|\partial^{\alpha}_{\xi}b(x,\xi)\right|\leq A_j2^{-j\rho|\alpha|} 
	\end{equation*}
	for any multi-index $\alpha$ with $|\alpha|\leq n$, then we have
\begin{equation*}
\|T_b f\|_{p}\leq CA_j2^{jn(1-\rho)/2}\|f\|_{p}, \quad 2\leq p\leq \infty,
\end{equation*}
where the constant $C$ depends only on $n$.
\end{lemma}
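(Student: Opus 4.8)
The plan is to prove Lemma \ref{l2.3} by establishing the two endpoint cases $p=2$ and $p=\infty$ and then interpolating.

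\textbf{The $L^2$ bound.} First I would note that the hypothesis is essentially a rescaled $S^0_{\rho,0}$-type condition: the symbol $b$ lives on the frequency region $|\xi|\lesssim 2^j$, and after the change of variables $\xi=2^j\zeta$ the derivative bounds $|\partial_\xi^\alpha b|\leq A_j 2^{-j\rho|\alpha|}$ become bounds of the form $2^{-j\rho|\alpha|}\cdot 2^{j|\alpha|}=2^{j(1-\rho)|\alpha|}$ on the dilated symbol. Rather than invoke Lemma \ref{l2.1} directly (which would not track the $2^{jn(1-\rho)/2}$ factor cleanly), I would argue by an almost-orthogonality / Schur-test argument on the kernel. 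Write $K(x,y)=\int e^{2\pi i(x-y)\cdot\xi}b(x,\xi)\,\textrm{d}\xi$. Integrating by parts in $\xi$ using the operator $(I-\triangle_\xi)$ (here only $\leq n$ derivatives are available, which is exactly why the multi-index bound is stated up to $|\alpha|\leq n$), and using $\supp b\subset\{|\xi|<2^{j+1}\}$, I would get the pointwise estimate
\begin{equation*}
|K(x,y)|\leq C A_j\, 2^{j(n-\rho s)}\,(1+|x-y|)^{-s}\quad\text{for }0\leq s\leq n,
\end{equation*}
and then optimize: for $|x-y|\lesssim 2^{-j\rho}$ use $s=0$ to get $|K(x,y)|\lesssim A_j 2^{jn}$, and for $|x-y|\gtrsim 2^{-j\rho}$ use $s=n$ to get $|K(x,y)|\lesssim A_j 2^{-j\rho n}|x-y|^{-n}$. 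Unfortunately $|x-y|^{-n}$ is not integrable, so a crude Schur test fails at the logarithmic endpoint and I would instead run the standard almost-orthogonality argument: decompose $b=\sum_\nu b_\nu$ according to a partition of the $\xi$-ball into unit cubes (or use a second dyadic decomposition in $x$ at scale $2^{-j\rho}$), estimate $\|T_{b_\nu}T_{b_\mu}^*\|$ and $\|T_{b_\nu}^*T_{b_\mu}\|$, and sum via Cotlar–Stein; the $2^{jn(1-\rho)/2}$ loss emerges as the square root of the number ($\sim 2^{jn(1-\rho)}$) of effective pieces. I expect this almost-orthogonality bookkeeping to be the main obstacle — everything else is routine.

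\textbf{The $L^\infty$ bound.} For $p=\infty$ the natural route is the kernel estimate above together with an $L^1$ bound on $K(x,\cdot)$. From $|K(x,y)|\lesssim A_j\min\{2^{jn},\,2^{-j\rho n}|x-y|^{-n}\}$ we again hit the non-integrability of $|x-y|^{-n}$, so a direct $\sup_x\int|K(x,y)|\,\textrm{d}y$ gives a spurious factor of $j$. The fix is to exploit cancellation: write $T_b f(x)=\int e^{2\pi i x\cdot\xi}b(x,\xi)\widehat f(\xi)\,\textrm{d}\xi$ and bound it through the $L^2\to L^\infty$ mapping of a single frequency-localized block. Concretely, since $b(x,\cdot)$ is supported in a ball of radius $2^{j+1}$, for fixed $x$ the function $\xi\mapsto b(x,\xi)\widehat f(\xi)$ has compact support, and $|T_bf(x)|\leq \|b(x,\cdot)\|_{L^2_\xi}\|\widehat f\|_{L^2(|\xi|<2^{j+1})}$ is the wrong (non-uniform) estimate; instead I would use that $T_b = T_b \circ S_{j+1}$ (composition with the Littlewood–Paley projection onto $|\xi|<2^{j+2}$, which acts boundedly on every $L^p$), reducing matters to showing $\|T_b g\|_\infty\lesssim A_j 2^{jn(1-\rho)/2}\|g\|_\infty$ for $g=S_{j+1}f$ — or more simply, bound $\|T_bf\|_\infty$ by testing the kernel against $L^1$ after one integration by parts to gain the factor $(1+|x-y|)^{-1}$ only, combined with Young's inequality in the remaining integrable directions; the cleanest version is to interpolate the kernel's $L^1_y$ norm against its behavior on the region $|x-y|\leq 1$ and $|x-y|>1$ separately and observe the total is $\lesssim A_j 2^{jn(1-\rho)}$, then take square root by factoring through $L^2$. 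In practice I would just prove $\|T_bf\|_\infty\lesssim A_j 2^{jn(1-\rho)/2}\|f\|_2$ (Cauchy–Schwarz with the $L^2_\xi$ norm of $b(x,\cdot)\varphi_{\leq j+1}$, which is $\lesssim A_j 2^{jn/2}$, times Plancherel — giving $2^{jn/2}$, too lossy, so this must be refined using the $\rho$-gain) — and here too the honest route is the kernel $L^1$ estimate with one extra integration by parts.

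\textbf{Interpolation.} Once $\|T_bf\|_2\lesssim A_j 2^{jn(1-\rho)/2}\|f\|_2$ and $\|T_bf\|_\infty\lesssim A_j 2^{jn(1-\rho)/2}\|f\|_\infty$ are in hand, the intermediate range $2<p<\infty$ follows immediately from the Riesz–Thorin interpolation theorem applied to the analytic family (or just the linear operator $T_b$) between $L^2\to L^2$ and $L^\infty\to L^\infty$, with the bound depending only on $n$ since both endpoint constants have the same shape $CA_j 2^{jn(1-\rho)/2}$. I would remark that only $\leq n$ derivatives in $\xi$ are used, consistent with the hypothesis, and that no regularity in $x$ is needed — $x$ enters only as a parameter in the kernel estimates, with all bounds uniform in $x$.
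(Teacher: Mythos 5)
Your overall scaffold (prove $p=2$ and $p=\infty$, then interpolate) is workable in principle, but you have correctly diagnosed — and then not resolved — the obstruction at $p=\infty$: the kernel bound $|K(x,y)|\lesssim A_j\min\{2^{jn},2^{jn(1-\rho)}|x-y|^{-n}\}$ gives $\int|K(x,y)|\,dy=\infty$ (logarithmic divergence at infinity), and with only $n$ $\xi$-derivatives available you cannot push the decay past $|x-y|^{-n}$. Everything you propose to repair this either reintroduces the lossy $2^{jn/2}$ factor or reverts to ``one extra integration by parts,'' which the hypothesis does not permit. So the $L^\infty$ endpoint is genuinely open in your write-up, and without it the interpolation step has nothing to stand on. The Cotlar--Stein plan for $p=2$ would work, but it is much heavier than necessary.

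The paper's proof sidesteps both problems with one Cauchy--Schwarz in $y$, chosen so that the weighted kernel is controlled in $L^2_y$ (via Plancherel) rather than in $L^1_y$. Concretely, split $K(x,y)=(1+2^{j\rho}|x-y|)^{-n}\cdot\bigl[(1+2^{j\rho}|x-y|)^{n}K(x,y)\bigr]$ and apply Cauchy--Schwarz in $y$:
\begin{equation*}
|T_bf(x)|\leq\Bigl(\int(1+2^{j\rho}|x-y|)^{-2n}|f(y)|^2\,\textrm{d}y\Bigr)^{1/2}\Bigl(\int\bigl|(1+2^{j\rho}|x-y|)^{n}K(x,y)\bigr|^2\,\textrm{d}y\Bigr)^{1/2}.
\end{equation*}
The first factor is $2^{-jn\rho/2}(\sigma_j*|f|^2)^{1/2}(x)$ with $\sigma_j(z)=2^{jn\rho}(1+2^{j\rho}|z|)^{-2n}$, and $\|\sigma_j\|_1\simeq 1$ because the weight exponent is $2n>n$ — this is exactly the integrability you could not get on the $L^1$ side, obtained here by squaring. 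For the second factor, $n$ integrations by parts (this is where the hypothesis $|\alpha|\leq n$ enters) followed by Plancherel in $y$ give the bound $C\sum_{|\alpha|\leq n}\bigl(\int|2^{j\rho|\alpha|}\partial_\xi^\alpha b(x,\xi)|^2\,\textrm{d}\xi\bigr)^{1/2}\lesssim A_j2^{jn/2}$, uniformly in $x$. Multiplying, $|T_bf(x)|\lesssim A_j2^{jn(1-\rho)/2}(\sigma_j*|f|^2)^{1/2}(x)$, and Young's inequality in $L^{p/2}$ (using $\|\sigma_j\|_1\lesssim 1$) delivers $\|T_bf\|_p\lesssim A_j2^{jn(1-\rho)/2}\|f\|_p$ for all $2\leq p\leq\infty$ simultaneously — no interpolation and no Cotlar--Stein needed. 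The moral: when the kernel's $L^1_y$ norm just barely fails, trade it for an $L^2_y$ norm via Cauchy--Schwarz against a summable weight, and let Plancherel absorb the oscillation.
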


\begin{proof}
Set $\sigma_j(z)=2^{jn\rho}(1+2^{j\rho}|z|)^{-2n}$. By the Cauchy-Schwarz inequality, we have
\begin{align*}
     &|T_b f(x)|\\
\leq &\left(\int\!\left(1+2^{j\rho}|x-y|\right)^{-2n}|f(y)|^2\textrm{d}y\right)^{1/2}\\
&\left(\int\!\left|\left(1+2^{j\rho}|x-y|\right)^{n}\int\! e^{2\pi i(x-y)\cdot \xi} b(x,\xi)\textrm{d}\xi\right|^2\textrm{d}y\right)^{1/2}\\
\leq & C_n2^{-\frac{jn\rho}{2}}\!\!\left(\sigma_j\ast|f|^2\right)^{\frac{1}{2}}\!(x)\!\!\sum_{|\alpha|\leq n}\!\!\left(\!\int\!\left|\int\! e^{2\pi i(x-y)\cdot \xi}2^{j\rho|\alpha|}\partial_\xi^{\alpha}b(x,\xi)\textrm{d}\xi\right|^2\!\!\!\textrm{d}y\!\right)^{\!\!\!1/2}\!\!.
\end{align*}
By the Plancherel theorem and assumptions,  the right side is bounded by
\begin{align*}
&C_n 2^{-\frac{jn\rho}{2}}\left(\sigma_j\ast|f|^2\right)^{1/2}(x)\sum_{|\alpha|\leq n}\left(\int \left|2^{j\rho|\alpha|}\partial_\xi^{\alpha}b(x,\xi)\right|^2\textrm{d}\xi\right)^{1/2}\\
\leq &C_n 2^{\frac{jn(1-\rho)}{2}}A_j\left(\sigma_j\ast|f|^2\right)^{1/2}(x).
\end{align*}
By Young's inequality, for $2\leq p\leq \infty$, we get that
\begin{equation*}
\|T_b f\|_p\leq C_n 2^{\frac{jn(1-\rho)}{2}}A_j\left\|(\sigma_j\ast|f|^2)^{1/2}\right\|_p
\leq  C_nA_j 2^{\frac{jn(1-\rho)}{2}}\|f\|_p.
\end{equation*}
This finishes the proof.
\end{proof}

%%%%%%%%%%%%%%%%%%%%%%%%%%%%%%%%%%%%%%%%%%%%%%%%%%%%%%%%%%%

\section{Proof of Theorem \ref{main3}}

For any cube $Q$, it is enough for us to choose a constant $\lambda_Q$ such that
\begin{equation*}
	\frac{1}{|Q|}\int_{Q}|T_{a}f(x)-\lambda_Q|\,\textrm{d}x\leq C\|f\|_{\infty}
\end{equation*}
for some constant $C$ independent of $Q$. Let $l(Q)$ be the side length of $Q$ and $x_Q$ the center of $Q$.

We first decompose $T_{a}f(x)$ into two parts
\begin{align}
	T_{a}f(x)&=\int e^{2\pi ix\cdot\xi}a(x,\xi)\left(\sum^{j_Q}_{j=0}\varphi_j(\xi)+\sum^{\infty}_{j=j_Q+1}\varphi_j(\xi)\right)\widehat{f}(\xi)\,\textrm{d}\xi \nonumber\\
	        &=T_{a}(S_{j_Q}f)(x)+\sum^{\infty}_{j=j_Q+1} T_{a,j}f(x),\label{lppdo5.1}
\end{align}	
where operators $S_{j_Q}$ and $T_{a,j}$ are defined by \eqref{pddbmo2.1} and \eqref{3}. We take $j_Q=\infty$ if $l(Q)\int^{\infty}_1\frac{w(t)}{t}dt\leq 1$ (that is, \eqref{lppdo5.1} only has the first part)
 and $j_Q=-1$ if $l(Q)\int^4_1\frac{w(t)}{t}dt>1$ (that is, \eqref{lppdo5.1} only has the second part). Otherwise, we take $j_Q$ to be the unique nonnegative integer satisfying
\begin{equation*}
	\int^{2^{j_Q+2}}_1\frac{w(t)}{t}\,\textrm{d}t\leq \frac{1}{l(Q)}<\int^{2^{j_Q+3}}_1\frac{w(t)}{t}\,\textrm{d}t.
\end{equation*}

\textbf{Step 1.} In this step we treat the first part of \eqref{lppdo5.1}. Set
\begin{equation*}
\widetilde{T}_{a_Q}f(x)=\int_{\mathbb{R}^{n}}e^{2\pi ix\cdot\xi}a(x_Q,\xi)\widehat{f}(\xi)\,\textrm{d}\xi.
\end{equation*}
	Because $a_Q:=a(x_Q,\xi)\in S^{n(\rho-1)/2}_{\rho,0}$ and its semi-norms are independent of $Q$, Lemma \ref{l2.2} gives that
\begin{equation*}
\left\|\widetilde{T}_{a_Q}f\right\|_{BMO}\leq C\|f\|_{\infty}.
\end{equation*}
	It is easy to see that $\|S_{j_Q}f\|_{\infty}\leq C\|f\|_{\infty}$ which yields that
\begin{equation*}
\left\|\widetilde{T}_{a_Q}(S_{j_Q}f)\right\|_{BMO}\leq C\|S_{j_Q}f\|_{\infty}\leq C\|f\|_{\infty}.
\end{equation*}
	Thus we can choose a constant $\lambda_Q$ such that
	\begin{align}
		\frac{1}{|Q|}\int_{Q}\left|\widetilde{T}_{a_Q}(S_{j_Q}f)(x)-\lambda_Q\right|\,\textrm{d}x\leq C\|f\|_{\infty} \label{lppdo5.2}
	\end{align}
with a constant $C$ independent of $Q$.
	
Set
\begin{equation*}
b_{Q}(x,\xi)=\frac{a(x,\xi)-a(x_Q,\xi)}{l(Q)}\eta\left(\frac{x-x_Q}{nl(Q)}\right).
\end{equation*}
It is easy to verify that $b_{Q}(x,\xi)=0$ when $|x-x_Q|\geq 2nl(Q)$ and that $b_{Q}(x,\xi)=(a(x,\xi)-a(x_Q,\xi))/l(Q)$ when $x\in Q$. By using assumptions on $a$ and $w$, we have for any multi-index $\alpha$ that
\begin{equation*}
	\left|\partial^{\alpha}_{\xi}\left(b_{Q}(x,\xi)\varphi_j(\xi)\right)\right|\leq C2^{j(\frac {n(\rho-1)}{2}-\rho |\alpha|)}w\left(2^{j+1}\right).
\end{equation*}
Hence, by using Lemma \ref{l2.3} (with $p=\infty$), we obtain that
	\begin{equation*}
		\left\|T_{b_{Q},j}f\right\|_{\infty}\leq Cw\left(2^{j+1}\right)\|f\|_{\infty}. 
	\end{equation*}
Thus if $x\in Q$ then
	\begin{align*}
		&\left|T_{a}(S_{j_Q}f)(x)-\widetilde{T}_{a_Q}(S_{j_Q}f)(x)\right|\\
	\leq & l(Q)\sum^{j_Q}_{j=0}\left|T_{b_{Q},j}f(x)\right|\leq C l(Q)\sum^{j_Q}_{j=0} w\left(2^{j+1}\right)\|f\|_{\infty}.
	\end{align*}
	
Applying the triangle inequality, the bound \eqref{lppdo5.2} and the above bound yields that
	\begin{align}
		&\frac{1}{|Q|}\int_Q\left|T_{a}(S_{j_Q}f)(x)-\lambda_Q\right|\,\textrm{d}x\nonumber\\
		\leq &\frac{1}{|Q|}\int_Q\left|T_{a}(S_{j_Q}f)(x)-\widetilde{T}_{a_Q}(S_{j_Q}f)(x)\right|\,\textrm{d}x+C\|f\|_{\infty}\nonumber\\
		\leq &C\left(l(Q)\sum^{j_Q}_{j=0}w\left(2^{j+1}\right)+1\right)\|f\|_{\infty}\nonumber\\
		\leq &C\left(l(Q)\int^{2^{j_Q+2}}_1 \frac{w(t)}{t}\,\textrm{d}t+1\right)\|f\|_{\infty}\nonumber\\
		\leq &C\|f\|_{\infty},\label{lppdo5.4}
	\end{align}
where in the last inequality we have used the definition of $j_Q$.

\textbf{Step 2.}  From this step we start to treat the second part of \eqref{lppdo5.1}.  For each $j>j_Q$, we decompose the cube $Q$ into an almost disjoint union of finitely many cubes,  $Q=\cup_{k=1}^{K} Q_{j,k}$, such that
\begin{equation*}
l_j/C\leq l(Q_{j,k})\leq l_j \textrm{ and } \ K\leq Cl^{-n}_j|Q|,
\end{equation*}
where $l_j=2^{-ju}w(2^{j+2})^{u-1}$ and $l(Q_{j,k})$ represents the side length of $Q_{j,k}$. This is feasible because by using assumptions on $w$, we get
	\begin{align*}
 &\int^{2^{j+2}}_1\frac{w(t)}{t}\,\textrm{d}t \leq A^{1-u}2^{(j+2)u}w\left(2^{j+2}\right)^{1-u}\int^{2^{j+2}}_1\!\left(\frac{w(t)}{t}\right)^u \frac{\textrm{d}t}{t}\\
 \leq &C_{A,u}2^{ju}w\left(2^{j+2}\right)^{1-u}=C_{A,u} l^{-1}_j,
	\end{align*}
which implies that $l_j\leq C l(Q)$ if $j>j_Q$.

Let $x_{j,k}$ be the center of $Q_{j,k}$, $a_{Q_{j,k}}\!\!:=a(x_{j,k},\xi)$ and $\widetilde{T}_{a_{Q_{j,k}, j}}$, $b_{Q_{j,k}}$ be defined as above. Then, by using $l(Q_{j,k})\leq l_j$ and Lemma \ref{l2.3} (with $p=\infty$), we readily obtain, as in Step 1, that  
\begin{align}
		&\frac{1}{|Q|}\sum^K_{k=1}\int_{Q_{j,k}}\left|T_{a,j}f(x)-\widetilde{T}_{a_{Q_{j,k}},j}f(x)\right|\,\textrm{d}x\nonumber\\
		= & \frac{1}{|Q|}\sum^K_{k=1}l(Q_{j,k})\int_{Q_{j,k}}\left|T_{b_{Q_{j,k}},j}f(x)\right|\,\textrm{d}x\nonumber\\
	\leq & \frac{l_j}{|Q|}\sum^K_{k=1}\left|Q_{j,k}\right|\left\|T_{b_{Q_{j,k}},j}f\right\|_{\infty}\nonumber\\
	\leq & Cl_jw\left(2^{j+1}\right)\|f\|_{\infty}\leq C\left(2^{-j}w\left(2^{j+2}\right)\right)^u\|f\|_{\infty},\label{lppdo5.5}
\end{align}
where $C$ is independent of $Q$.

	\textbf{Step 3.} Set $L_j=2+2^{j(1-\rho)}(w(2^{j+2})/2^j)^{2u/n}$. Let
\begin{equation*}
\widetilde{Q}_{j,k}=Q(x_{j,k},L_j l(Q_{j,k}))
\end{equation*}
be an enlarged $Q_{j,k}$ with center $x_{j,k}$ and side length $L_j l(Q_{j,k})$, and
\begin{equation*}
f_{j,k}=f\chi_{\widetilde{Q}_{j,k}}
\end{equation*}
be $f$ restricted to $\widetilde{Q}_{j,k}$.
	
One can easily check that $2^{jn(1-\rho)/2}a(x_{j,k},\xi)\varphi_j(\xi)\in S^{0}_{\rho,0}$ and its seminorms
	are independent of $j$, $k$ and $Q$. Hence, by Lemma \ref{l2.1}, we get that
	\begin{align*}
		\left\|\widetilde{T}_{a_{Q_{j,k}},j}f_{j,k}\right\|_2&=\left\|2^{\frac{jn(\rho-1)}{2}}\int_{\mathbb{R}^{n}}e^{2\pi ix\cdot\xi}2^{\frac{jn(1-\rho)}{2}}a(x_{j,k},\xi)\varphi_j(\xi)\widehat{f_{j,k}}(\xi)\,\textrm{d}\xi\right\|_2\\
		&\leq C2^{\frac{jn(\rho-1)}{2}}\left\|f_{j,k}\right\|_2\\
		&\leq C2^{\frac{jn(\rho-1)}{2}}L_j^{\frac n2}|Q_{j,k}|^{\frac 12}\|f\|_{\infty},
	\end{align*}
	where $C$ is independent of $j$, $k$ and $Q$. Using this bound and H\"{o}lder's inequality, we have
	\begin{align}
		&\frac{1}{|Q|}\sum^K_{k=1}\int_{Q_{j,k}}\!\left|\widetilde{T}_{a_{Q_{j,k}},j}f_{j,k}(x)\right|\textrm{d}x\nonumber\\
		\leq &\frac{1}{|Q|}\sum^K_{k=1}\left|Q_{j,k}\right|^{\frac 12}\left\|\widetilde{T}_{a_{Q_{j,k}},j}f_{j,k}\right\|_2 \leq C 2^{\frac{jn(\rho-1)}{2}}L_j^{\frac n2}\|f\|_{\infty}\nonumber\\
		\leq &C\left(\left(\frac{w(2^{j+2})}{2^j}\right)^{u}+2^{\frac{jn(\rho-1)}{2}}\right)\|f\|_{\infty}.\label{lppdo5.8}
	\end{align}

	\textbf{Step 4.} As $u<n/(n+2)$, we can take $N>n$ sufficiently large such that
	\begin{equation*}
	\left(N-\frac n2\right)\left(1-\frac{n+2}{n}u\right)\geq u.
	\end{equation*}
	By using assumptions on $w$, we have that for $t\geq 1$
\begin{align}\label{w1}
	\left(\!\frac{w(t)}{t}\!\right)^{\!\!u}=\int^{2t}_t \!\left(\!\frac{w(t)}{t}\!\right)^{\!\!u}\frac{\textrm{d}s}{t}\leq 2^{1+u}A^{u}\!\int^{2t}_t \!\left(\!\frac{w(s)}{s}\!\right)^{\!\!u}\frac{\textrm{d}s}{s}\leq (2A)^{1+u}.
\end{align}
	So, for any nonnegative integer $j$, one has
	\begin{align}\label{w2}
	(1+2^{j\rho}L_jl_j)^{\frac n2-N}\leq \left(\frac{w(2^{j+2})}{2^j}\right)^{\!\left(\frac{n+2}{n}u-1\right)\left(\frac n2-N\right)}\!\!\!\leq C\left(\frac{w(2^{j+2})}{2^j}\right)^{\!u}.
	\end{align}
	As $L_j>2$, when $y\notin \widetilde{Q}_{j,k}$ and $x\in Q_{j,k}$, one also has
\begin{equation*}
|y-x|\geq
\frac{1}{2}(L_j-1)l(Q_{j,k})\geq \frac{1}{4}L_j l(Q_{j,k})\geq c L_j l_j.
\end{equation*}

	For $x\in Q_{j,k}$, applying the Cauchy-Schwarz inequality yields that
	\begin{align*}
		&\left|\widetilde{T}_{a_{Q_{j,k}},j}\left(f-f_{j,k}\right)(x)\right|\nonumber\\
		=&\left|\int_{\widetilde{Q}^c_{j,k}}\left(\int e^{2\pi i(x-y)\cdot \xi} a\left(x_{j,k},\xi\right)\varphi_j(\xi)\textrm{d}\xi\right)f(y)\,\textrm{d}y\right|\nonumber\\
	\leq &  \left(\int_{|y-x|\geq cL_jl_j}\left(1+2^{j\rho}|x-y|\right)^{-2N}\textrm{d}y\right)^{1/2}\nonumber\\
		&\left(\int \!\!\left(\left(1+2^{j\rho}|x-y|\right)^{N}\!\!\!\int e^{2\pi i(x-y)\cdot \xi} a(x_{j,k},\xi)\varphi_j(\xi)\textrm{d}\xi\right)^2\!\!\!\textrm{d}y\right)^{\!\!\!1/2}\!\!\!\!\!\|f\|_{\infty},
	\end{align*}
where by \eqref{w2} the first factor is bounded by
\begin{equation*}
	C2^{-\frac{jn\rho}{2}}\left(1+2^{j\rho} L_jl_j\right)^{\frac n2-N}\leq C2^{-\frac{jn\rho}{2}}\left(\frac{w(2^{j+2})}{2^j}\right)^{u},
\end{equation*}
and, by the Plancherel theorem and $a\in S^{n(\rho-1)/2}_{\rho,1}$, the second factor is bounded by
		\begin{align*}
		&C\sum_{|\alpha|\leq N}\left(\int \left|\int e^{2\pi i(x-y)\cdot \xi} 2^{j\rho|\alpha|}\partial^{\alpha}_{\xi}\left(a(x_{j,k},\xi)\varphi_j(\xi)\right)\textrm{d}\xi\right|^2\textrm{d}y\right)^{1/2}\\
		\leq &C\sum_{|\alpha|\leq N}\left(\int
		\left|2^{j\rho|\alpha|}\partial^{\alpha}_{\xi}(a(x_{j,k},\xi)\varphi_j(\xi))\right|^2\textrm{d}\xi\right)^{1/2}\\
		\leq &C\sum_{|\alpha|\leq N}\left(\int_{2^{j-1}<|\xi|<2^{j+1}}2^{jn(\rho-1)}\textrm{d}\xi\right)^{1/2}\leq C 2^{\frac{jn\rho}{2}}.
	\end{align*}
Hence we obtain
\begin{equation}
\left|\widetilde{T}_{a_{Q_{j,k}},j}\left(f-f_{j,k}\right)(x)\right| \leq C\left(\frac{w(2^{j+2})}{2^j}\right)^{u}\|f\|_{\infty}.\label{lppdo5.10}
\end{equation}

	\textbf{Step 5.}  	Finally, we infer from \eqref{lppdo5.1}, \eqref{lppdo5.4}, \eqref{lppdo5.5}, \eqref{lppdo5.8} and \eqref{lppdo5.10} that
	\begin{align*}
		&\frac{1}{|Q|}\int_{Q}\left|T_{a}f(x)-\lambda_Q\right|\,\textrm{d}x\\
		\leq &\frac{1}{|Q|}\int_{Q}\left|T_{a}(S_{j_Q}f)(x)-\lambda_Q\right|+\sum^{\infty}_{j=j_Q+1} \left|T_{a,j}f(x)\right|\,\textrm{d}x\\
		\leq &C\|f\|_{\infty}+\frac{1}{|Q|}\sum^{\infty}_{j=j_Q+1}\sum^K_{k=1}\int_{Q_{j,k}}\bigg(\left|T_{a,j}f(x)-\widetilde{T}_{a_{Q_{j,k}},j}f(x)\right|\\
		&+\left|\widetilde{T}_{a_{Q_{j,k}},j}f_{j,k}(x)\right|+\left|\widetilde{T}_{a_{Q_{j,k}},j}(f-f_{j,k})(x)\right|\bigg)\,\textrm{d}x\\
		\leq &C\bigg(1+\sum^{\infty}_{j=j_Q+1}\left(\left(\frac{w(2^{j+2})}{2^j}\right)^{u}+2^{\frac{jn(\rho-1)}{2}}\right)\bigg)\|f\|_{\infty}\\
		\leq & C\left(1+\int^{\infty}_{2^{j_Q}}\left(\frac{w(t)}{t}\right)^{u}\frac{\textrm{d}t}{t}\right)\|f\|_{\infty}\leq C \|f\|_{\infty},
	\end{align*}
where we have used  assumptions $\rho<1$ and  \eqref{6}.	This completes the proof of Theorem \ref{main3}. \qed

%%%%%%%%%%%%%%%%%%%%%%%%%%%%%%%%%%%%%%%%%%%%%%%%%%%%%%%%%%%

\section{Proof of Theorem \ref{main1}}

We choose a smooth real function $\psi$ such that
\begin{equation*}
	\widehat{\psi}\in C_c^{\infty}(B_{1/100}) \textrm{ and } \int_{\mathbb{R}^n}\!\psi(x)\,\textrm{d}x=1.
\end{equation*}
We then decompose $a$ as follows
\begin{align*}
	a(x,\xi)=&\sum^{\infty}_{j=0}a(x,\xi)\varphi_j(\xi)\\
	=&\int\sum^{\infty}_{j=0} a_j(x-u,\xi)2^{jn}\psi\left(2^ju\right)\textrm{d}u\\
	&+\sum^{\infty}_{j=0}\int \left(a_j(x,\xi)-a_j(x-u,\xi)\right)2^{jn}\psi\left(2^ju\right)\textrm{d}u\\
	=:&b(x,\xi)+\sum^{\infty}_{j=0}\widetilde{a}_j(x,\xi).
\end{align*}
Hence $T_a=T_b+\sum^{\infty}_{j=0}T_{\widetilde{a}_j}$.

Note that $\xi$-support of the symbol $\widetilde{a}_j$ is contained in $\{\xi\in \mathbb{R}^n:|\xi|<2^{j+1}\}$ and
\begin{equation*}
\left|\partial^{\alpha}_{\xi}\widetilde{a}_j(x,\xi)\right|\leq C w\left(2^{j+1}\right)2^{j(\frac{n(\rho-1)}{2}-1-\rho|\alpha|)}.
\end{equation*}
Thus by Lemma \ref{l2.3} (with $p=2$) we get
\begin{equation*}
\|T_{\widetilde{a}_j}f\|_2\leq C w\left(2^{j+1}\right) 2^{-j}\|f\|_2.
\end{equation*}
To keep the restriction on the frequency of $\widetilde{a}_j$, we introduce multiplier operators
\begin{equation*}
\widehat{\triangle^{\prime}_0f}(\xi)=\textbf{1}_{\{\xi : |\xi|\leq 2\}}(\xi)\widehat{f}(\xi)
\end{equation*}
and
\begin{equation*}
	\widehat{\triangle^{\prime}_j f}(\xi)=\textbf{1}_{\{\xi : 2^{j-1}\leq |\xi|\leq 2^{j+1}\}}(\xi)\widehat{f}(\xi), \quad j\in\mathbb{N}.
\end{equation*}
Therefore,
\begin{align*}
	\left\|\sum^{\infty}_{j=0}T_{\widetilde{a}_j}f\right\|_{2}\leq & \sum^{\infty}_{j=0}\left\|T_{\widetilde{a}_j}f\right\|_2=\sum^{\infty}_{j=0}\left\|T_{\widetilde{a}_j}\left( \triangle^{\prime}_j f\right)\right\|_2\nonumber\\
	\leq &C\sum^{\infty}_{j=0}w\left(2^{j+1}\right)2^{-j}\left\|\triangle^{\prime}_j f\right\|_2\nonumber\\
	\leq &C \left(\sum^{\infty}_{j=0}w^2(2^{j+1})2^{-2j} \right)^{1/2} \left(\sum^{\infty}_{j=0}\left\|\triangle^{\prime}_j f\right\|_2^2 \right)^{1/2}\nonumber\\
	\leq &C\int^{\infty}_1\frac{w^2(t)}{t^3}\,\textrm{d}t \|f\|_2 \leq C\|f\|_2.
\end{align*}

It remains to handle the operator $T_b$. We observe that $T_b f=S(\widehat{f})$ with
\begin{equation*}
Sf(x)=\int_{\mathbb{R}^n}\!e^{2\pi i x\cdot\xi}b(x,\xi)f(\xi)\,\textrm{d}\xi.
\end{equation*}
By the Plancherel theorem, it suffices to establish the $L^2$ boundedness of the operator $S$. Let $S^*$ be the adjoint operator of $S$. It then suffices to show the $L^2$ boundedness of
	\begin{align*}
		SS^*f(x)&=\iint e^{2\pi i (x-y)\cdot\xi}b(x,\xi)\overline{b(y,\xi)}f(y)\,\textrm{d}y\textrm{d}\xi\\
		&=\sum_{j=0}^{\infty} \sum_{\substack{|k-j|\leq 1\\ k\geq 0}} \iint e^{2\pi i (x-y)\cdot\xi}a_j\ast_1\!\Psi_j (x,\xi)\overline{a_k}\ast_1 \!\Psi_k(y,\xi)f(y)\,\textrm{d}y\textrm{d}\xi\\
		&=: \sum^{\infty}_{j=0}R_jf(x),
	\end{align*}
where we denote $\Psi_j(u)=2^{jn}\psi(2^j u)$, $j\in\mathbb{N}$, and a partial convolution $f\!*_1\!\phi (x,\xi)=\int_{\mathbb{R}^n} \! f(x-u,\xi)\phi(u)\,\textrm{d}u$ for functions $f$ on $\mathbb{R}^{2n}$ and $\phi$ on $\mathbb{R}^n$.

We observe that
	\begin{align}\label{lppdo3.1}
	R_jR_k^*=R^*_jR_k=0 \textrm{ if }|j-k|\geq 5,
\end{align}	
where $R_j^*$ denotes the adjoint operator of $R_j$. Indeed, notice that
	\begin{align*}
	&\widehat{R_jf}(\eta)=\\
   &\sum_{\substack{|k-j|\leq 1\\ k\geq 0}}\!\int \!\! e^{2\pi i (z\cdot(\xi-\eta)-y\cdot\xi)}a_j(z,\xi)\widehat{\psi}\left(2^{-j}(\eta-\xi)\right)\overline{a_k}\ast_1 \!\Psi_k(y,\xi)f(y)\textrm{d}z\textrm{d}y\textrm{d}\xi.
    \end{align*}
Hence $\widehat{R_jf}(\eta)\neq 0$ implies that there exists a point $\xi$ such that
\begin{equation*}
a_j(z,\xi)\widehat{\psi}\left(2^{-j}(\eta-\xi)\right)\neq 0.
\end{equation*}
It is easy to see that $\widehat{R_j f}$ is supported in
	$\{\eta:|\eta|\leq \frac{201}{100}\}$ if $j=0$, and in the shell $\frac{49}{100}2^{j}\leq |\eta|\leq \frac{201}{100}2^{j}$ if $j\geq 1$. Similar computation shows that $\widehat{R_j^*f}$ is supported in 	$\{\eta:|\eta|\leq \frac{201}{100}2^{j+1}\}$ if $j=0,1$, and in the shell $\frac{49}{100}2^{j-1}\leq |\eta|\leq \frac{201}{100}2^{j+1}$ if $j\geq 2$. By the Plancherel theorem we have
\begin{align*}
	\langle R_j^*R_kf, g\rangle=\langle R_k f, R_j g\rangle=\langle \widehat{R_k f}, \widehat{R_j g}\rangle.
\end{align*}	
Thus we get \eqref{lppdo3.1}.

We also observe that
\begin{align}\label{lppdo3.2}
	\|R_jf\|_2\leq C\|f\|_2.
\end{align}
Indeed, if we denote
\begin{equation*}
Q_j f(x)=\int \! e^{2\pi i x\cdot\xi}a_j\ast_1\!\Psi_j (x,\xi)f(\xi)\,\textrm{d}\xi,
\end{equation*}
it is then easy to verify that
\begin{equation*}
R_jf(x)=\sum_{\substack{|k-j|\leq 1\\ k\geq 0}} Q_j Q_k^*f(x).
\end{equation*}
Lemma \ref{l2.3} (with $p=2$) and the Plancherel theorem  readily give $\|Q_j f\|_2\leq C\|f\|_2$. Then \eqref{lppdo3.2} follows from the simple fact  $\|Q_j\|_{L^2-L^2}=\|Q_j^*\|_{L^2-L^2}$.

Applying \eqref{lppdo3.1}, \eqref{lppdo3.2} and Cotlar's lemma in \cite[p. 280]{S93} yields the $L^2$ boundedness of the operator $SS^*$, as desired. \qed

%%%%%%%%%%%%%%%%%%%%%%%%%%%%%%%%%%%%%%%%%%%%%%%%%%%%%%%%%%%%%%%%%%%%%%%%%%%%%%%%%%%%%%%%%%
%%%%%%%%%%%%%%%%%%%%%%%%%%%%%%%%%%%%%%%%%%%%%%%%%%%%%%%%%%%%%%%%%%%%%%%%%%%%%%%%%%%%%%%%%%%%%%%%%

%
%
%\subsection*{Acknowledgements}
%We thank 

%\subsection*{Conflict of interest statement and data availability statement}
%
%Not applicable.

%%%%%%%%%%%%%%%%%%%%%%%%%%%%%%%%%%%%%%%%%%%%%%%%%%%%%%%%%%%%%%%%%%%%%%%%%%%%%%%%%%%%%%%%%%
%%%%%%%%%%%%%%%%%%%%%%%%%%%%%%%%%%%%%%%%%%%%%%%%%%%%%%%%%%%%%%%%%%%%%%%%%%%%%%%%%%%%%%%%%%%%%%%%%


\begin{thebibliography}{99}

\bibitem{AH90}
\'{A}lvarez, J. and Hounie, J., \emph{Estimates for the kernel and continuity properties of pseudo-differential operators}. \textbf{Ark. Mat.} 28 (1990), no. 1, 1--22.

\bibitem{CV71} Calder\'{o}n, A. P. and Vaillancourt, R., \emph{On the boundedness of pseudo-differential operators}. \textbf{J. Math. Soc. Japan} 23 (1971), 374--378.

\bibitem{CV72} Calder\'{o}n, A. P. and Vaillancourt, R., \emph{A class of bounded pseudo-differential operators}. \textbf{Proc. Nat. Acad. Sci. U.S.A.} 69 (1972), 1185--1187.

\bibitem{GZ} Guo, J. and Zhu, X., \emph{Some notes on endpoint estimates for pseudo-differential operators}. \textbf{Mediterr. J. Math.} 19 (2022), no. 6, Paper No. 260, 14 pp.

\bibitem{H66} H\"{o}rmander, L., \emph{Pseudo-differential operators and hypoelliptic equations}. \textbf{Singular integrals (Proc. Sympos. Pure Math.}, Chicago, Ill., 1966), 138--183.
Amer. Math. Soc., Providence, R.I., 1967.

\bibitem{H71} H\"{o}rmander, L., \emph{On the $L^{2}$ continuity of pseudo-differential operators}. \textbf{Comm. Pure Appl. Math.} 24 (1971),  529--535.

\bibitem{H86} Hounie, J., \emph{On the $L^2$ continuity of pseudo-differential operators}. \textbf{Comm. Partial Differential Equations} 11 (1986), no. 7, 765--778.

\bibitem{KW07} Kenig, C. E. and Staubach, W., \emph{$\Psi$-pseudodifferential operators and estimates for maximal oscillatory integrals}. \textbf{Studia Math.} 183 (2007), no. 3, 249--258.

\bibitem{R76} Rodino, L., \emph{On the boundedness of pseudo differential operators in the class $L^{m}_{\rho,1}$}. \textbf{Proc. Amer. Math. Soc.} 58 (1976), 211--215.

\bibitem{RZ23} Ruan, J. and Zhu, X., \emph{$L^{\infty}$-BMO boundedness of some pseudo-differential operators}. \textbf{J. Pseudo-Differ. Oper. Appl.} 14 (2023), no. 3, Paper No. 33, 11 pp.


\bibitem{S93} Stein, E. M., \emph{Harmonic analysis: real-variable methods, orthogonality, and oscillatory integrals. With the assistance of Timothy S. Murphy}.
\textbf{Princeton Mathematical Series}, 43. Monographs in Harmonic Analysis, III. Princeton University Press, Princeton, NJ, 1993.

\bibitem{W} Wang, G., \emph{Sharp function and weighted $L^p$ estimates for pseudo-differential operators with symbols in general H\"{o}rmander classes}. Preprint, arXiv:2206.09825.

\bibitem{WC} Wang, G. and Chen, W., \emph{A pointwise estimate for pseudo-differential operators}.
\textbf{Bull. Math. Sci.} 13 (2023), no. 2,  2250001 (13 pages).



\end{thebibliography}
\end{document}